\newtheorem{theorem}{Theorem}
\newtheorem{corollary}{Corollary}
\newtheorem{lemma}[theorem]{Lemma}
\newtheorem{proposition}[theorem]{Proposition}
\begin{document}

\title{Stabilization of Coefficients for Partition Polynomials }

\author{Robert P.  Boyer}
\address{Department of Mathematics, Drexel University, Philadelphia, PA}

\author{William J. Keith}
\address{CELC, Universidade Lisboa, 1649-003 Lisboa, Portugal}

\email{boyerrp@drexel.edu}

\email{william.keith@gmail.com}

\begin{abstract}  We find that a wide variety of families of partition statistics stabilize in a fashion similar to $p_k(n)$, the number of partitions of $n$ with $k$ parts, which satisfies $p_k(n) = p_{k+1}(n+1)$, $k \geq n/2$.  We bound the regions of stabilization, discuss variants on the phenomenon, and give the limiting sequence in many cases as the coefficients of a single-variable generating function.  Examples include many statistics that have an Euler product form, partitions with prescribed subsums, and plane overpartitions.
\end{abstract}

\subjclass[2000]{Primary 05C15, 15A17; Secondary 11P81,11P83}
\keywords{Partition, Polynomials, Infinite product generating function}

\thanks{This work was started when William Keith was on the faculty at Drexel University.}

\maketitle

\section{Introduction}

Consider a set of combinatorial objects $\{\alpha\}$ with statistics $wt(\alpha)$ and $t(\alpha)$, thinking of $wt(\alpha)$ as the primary descriptor.  Let $G(z,q)$ be its two-variable generating function, that is, if $p(n,k)$ is the number of objects $\alpha$ with $wt(\alpha) = n$ and $t(\alpha) = k$, 
 \[
 G(z,q) = \sum_{n,k \in \mathbb{N} \bigcup \{0\}} p(n,k) q^n z^k\,  \text{.}
 \]
    
An important example occurs when the generating function
 $G(z,q)$ has the Euler product form 
\begin{equation}\label{eq:product}
G(z,q) = \prod_{i \geq 1} \frac{1}{(1-z q^i)^{a_i}}
\end{equation}
with $a_i \in \mathbb{Z}^+ \bigcup \{0\}$.  
We let  $F_n(z)$ denote the $q^n$ coefficient of $G(z,q)$ which is a polynomial in $z$, so
\[
G(z,q) = \sum_n F_n(z) q^n, \quad  \, F_n(z) = \sum_k p(n,k) z^k \, .
\]

For $a_i = 1$ this is the generating function for partitions of weight $wt(\alpha) = n$ with number of parts $t(\alpha) = k$.  It is a well-known fact in partition theory that $p(n,k)$ for $k$ nearly equal to $n$ has a value independent of $n$: $p(n,n-b)$ is the number of partitions of $b$ for $b\leq\frac{n}{2}$.  A similar result holds for plane partitions of $n$ indexed by their trace \cite[p. 199]{Andrews_book} or \cite[Corollary 5.3]{Stanley}.

This phenomenon, which we call {\sl stabilization,}  is widespread in generating functions of combinatorial interest, even those of greater complexity.  The purpose of this paper is to describe this behavior in more general cases, and consider some illustrative examples and variations.  
We found the polynomial framework to be well suited to these problems rather than a direct approach.  The arguments should be adaptable to a wide variety of cases.

We would like to thank the anonymous referee for a careful readthrough: noting typos, improving exposition, and suggesting occasional strengthening of theorems for which we had been hesitant to extend our reach.  This article is substantially improved from their efforts.

\section{ Basic Infinite Product Generating Functions }

Let  $G(z,q)$ be an infinite product generating function of the form
\begin{equation}\label{eq:new_genfct}
G(z,q) = \sum_{n=1}^\infty F_n(z) q^n =  \prod_{j=1}^{\infty} \frac{1}{(1-z^{b(j)} q^{c(j)})^{a_j}}.
 \end{equation}
 where we assume that the number of $j$ for which $c(j) = t$ for any $t$ is finite, so that the series converges. We find that if the $c$ grow sufficiently faster than the $b$, the upper ends of the $F_n(z)$ stabilize, to the coefficients of a single-variable generating function which we can give.
  
  Let ${\mathcal F}$ denote the set of all nonnegative integer sequences with finite support.
  For ${\bf e} = (e_1,e_2,\dots) \in {\mathcal F}$
set
 \[
 \mu(\mathbf{e}) = (c(1)^{e_1}c(2)^{e_2}\cdots),\quad
 \nu(\mathbf{e}) = (b(1)^{e_1}b(2)^{e_2}\cdots)
 \]
to denote  the partitions with parts $c(j)$ (resp. $b(j)$) appearing $e_j$ times. 
A direct expansion of the generating functions yields
an explicit form  for the polynomial $F_n(z)$:
\begin{eqnarray}\label{eq:take_away}
F_n (z)
 &= &
\sum_{k=0}^n z^k  \,   \sum_{{\mu(\mathbf{e}) \vdash n} \atop {\nu(\mathbf{e}) \vdash k}} \prod_{i \geq 1} 
  \binom{ a_i+e_i-1}{e_i}
  \end{eqnarray}
  We will compare the coefficients of $F_n(z)$ with those of the expansion of
  \[
  \prod_{i=1}^\infty
  \frac{1}{ (1-z^{b(i)})^{a_i}} =
  \sum_{k=0}^\infty z^k \, \sum_{ \nu( {\bf e}) \vdash k} \, \prod_{i\geq 1} \binom{ a_i + e_i -1}{ e_i} .
  \]

\begin{theorem}
\label{thm:stabliztion2}
 Suppose that the exponents  satisfy $a_1 = 1 = b(1) = c(1)$.
 If there exists a positive integer $m \geq 2$ such that 
 \begin{equation}\label{eq:stable2}
 m \cdot b(j) \leq c (j), \quad j \geq 2  ,
 \end{equation}
 then 
 \\
 (a)
 for $k > n/m $, $\left[z^k \right] F_n(z) = \left[z^{k+1} \right] F_{n+1} (z)$.
 \\
 (b) 
  If  $c(j) - b(j) > 0$ for all $j>1$ and the set of $j$ for which $c(j) - b(j)$ takes a given value is finite for any fixed difference, 
  then for $\ell \leq \lfloor n/m \rfloor$, 
  \[
  \left[ z^{n-\ell}\right]F_n(z) = \left[z^{\ell}\right] \prod_{j \geq 2} \frac{1}{(1-z^{c(j)-b(j)})^{a_j}} .
  \]
\end{theorem}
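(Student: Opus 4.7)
My plan is to work directly from the explicit expansion~\eqref{eq:take_away}, reading $[z^k]F_n(z)$ as a sum over tuples $\mathbf{e}=(e_1,e_2,\ldots)\in\mathcal{F}$ subject to the two linear constraints $\sum_j e_j c(j)=n$ and $\sum_j e_j b(j)=k$. The normalization $a_1=b(1)=c(1)=1$ does most of the bookkeeping: $\binom{a_1+e_1-1}{e_1}=1$, so $e_1$ contributes a trivial factor to the binomial product, and because $c(1)=b(1)=1$, increasing $e_1$ increments the $n$-weight and the $k$-weight by the same amount. The inequality $m\cdot b(j)\le c(j)$ will be used to force the indices with $j\ge 2$ to stay small whenever $k$ is close to $n$.

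For part (a), I would exhibit the bijection $(e_1,e_2,\ldots)\leftrightarrow(e_1+1,e_2,\ldots)$ between the tuples contributing to $[z^k]F_n(z)$ and those contributing to $[z^{k+1}]F_{n+1}(z)$. The binomial product is preserved on the nose, so once the map is shown to be a bijection the two coefficients agree. Injectivity is automatic; for surjectivity, given $\mathbf{e}'$ contributing to $[z^{k+1}]F_{n+1}(z)$, the only way we cannot decrement $e'_1$ is $e'_1=0$. Assuming this, I would apply the growth hypothesis to $\sum_{j\ge 2}e'_j c(j)=n+1$ and $\sum_{j\ge 2}e'_j b(j)=k+1$ to obtain $m(k+1)\le n+1$, which rearranges to $k\le(n+1-m)/m<n/m$ for $m\ge 2$, contradicting $k>n/m$.

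For part (b), I would compute $[z^{n-\ell}]F_n(z)$ directly. Tuples that contribute satisfy $\sum_j e_j c(j)=n$ and $\sum_j e_j b(j)=n-\ell$; subtracting and using $c(1)=b(1)$ reduces these to the single constraint $\sum_{j\ge 2}e_j(c(j)-b(j))=\ell$ together with the explicit formula $e_1=n-\sum_{j\ge 2}e_j c(j)$. I would check that under $\ell\le n/m$ the value $e_1$ is automatically nonnegative: the inequality $c(j)-b(j)\ge(m-1)b(j)$ gives $\sum_{j\ge 2}e_j b(j)\le\ell/(m-1)$, whence $\sum_{j\ge 2}e_j c(j)\le m\ell/(m-1)\le n/(m-1)\le n$ for $m\ge 2$. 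Because $e_1$ contributes a trivial factor and is otherwise unconstrained, the surviving sum over $(e_j)_{j\ge 2}$ with $\sum_{j\ge 2}e_j(c(j)-b(j))=\ell$ is exactly $[z^\ell]\prod_{j\ge 2}(1-z^{c(j)-b(j)})^{-a_j}$, with the finiteness hypothesis on $c(j)-b(j)$ ensuring that this infinite product makes sense as a formal power series.

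The main obstacle is not a deep idea but careful tracking: one has to keep the roles of the $j=1$ term (trivial, carries no binomial weight, balances $n$ against $k$) and the $j\ge 2$ terms (small by the growth hypothesis) clearly separated. The only mildly delicate inequality is $m\ell/(m-1)\le n$, which is precisely where the clean cutoff $\ell\le\lfloor n/m\rfloor$ is consumed; once that is verified, both parts become bijective bookkeeping.
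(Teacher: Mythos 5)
Your proposal is correct and follows essentially the same route as the paper: part (a) is the identical bijection shifting $e_1$ by one, with the growth hypothesis ruling out $e_1=0$ in the range $k>n/m$, and part (b) is the same subtraction of $b(j)$ from $c(j)$ reducing to the single constraint $\sum_{j\ge 2}e_j(c(j)-b(j))=\ell$ with the residual inequality checked to be automatic. The only difference is cosmetic — you phrase (b) via the linear constraints directly rather than via the auxiliary partition $\lambda(\mathbf{e})$ — and your inequalities all check out.
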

\begin{proof}
(a)
From the explicit form (\ref{eq:take_away}) of the polynomial $F_n(z)$, we can expand it as
\begin{eqnarray}  % \label{eq:take_away}
F_n (z)
 &= &
\sum_{k=0}^n z^k  \,   \sum_{{\mu(\mathbf{e}) \vdash n} \atop {\nu(\mathbf{e}) \vdash k}} \prod_{i \geq 1} 
  \binom{ a_i+e_i-1}{e_i}
  \nonumber
\\ 
&=& \sum_{k=0}^n \sum_{e_1=0}^k z^{b(1) e_1}
 \sum_{{\mu^-(\mathbf{e}) \vdash n - c(1)e_1} \atop {\nu^-(\mathbf{e}) \vdash k - b(1)e_1}} z^{k-b(1)e_1}
  \prod_{i \geq 2}  \left( {{a_i+e_i-1} \atop {e_i}} \right)  .
\nonumber
\end{eqnarray}

Now if the integer sequence ${\bf e}$ gives a contribution to $[z^{k+1}]F_{n+1}(z)$ and $e_1>0$, then we define 
${\bf e}'$ 
as the integer sequence 
all of whose  terms  agree with ${\bf e}$ except for $j=1$ where we set $e_1'=e_1-1$.
In this way, we obtain all the possible terms contributing to $[z^k]F_n(z)$.
Conversely, any term for $[z^k]F_n(z)$ gives a contribution to $[z^{k+1}]F_{n+1}(z)$ by simply adding
1 to its first component.

The result reduces to showing that any contribution to $[z^{k+1}]F_{n+1}(z)$ indexed by ${\bf e}$
must have $e_1>0$.

We introduce the notation for the modified partitions 
\[
 \mu^{-}(\mathbf{e}) = (c(2)^{e_2} c(3)^{e_3} \cdots),\quad
 \nu^{-}(\mathbf{e}) = (b(2)^{e_2}  b(3)^{e_3} \cdots)  .
 \]
Now assume that  the partition with $\mu({\bf e}) \vdash n$ and $\nu({\bf e}) \vdash k$ gives a contribution to $[z^{k+1}]F_{n+1}(z)$ but
  $e_1 = 0$. So $\mu^-(\mathbf{e}) \vdash n$ and $\nu^-(\mathbf{e}) \vdash k$, but if $c(j) \geq m \cdot b(j)$ for all $j \geq 2$, 
  then $\vert \mu^-({\bf e}) \vert \geq m \vert \nu^-({\bf e}) \vert$.
Hence  if $ k > \frac{n}{m}$, we find that $\vert \mu^{-}({\bf e}) \vert > n$, a contradiction.  Thus all terms in both expansions are the same, and so the coefficients are equal.  This proves part (a).

For part (b), we begin by forming a new partition as follows.
First subtract $b(j)$ from each $c(j)$ and consider the partition
$\lambda(\mathbf{e}) = ((c(2)-b(2))^{e_2}\dots)$.  This removes exactly the amount $\vert \nu^{-}(\mathbf{e})\vert$ from $\vert \mu^{-}(\mathbf{e})\vert$, so
\begin{eqnarray}
F_n (z)
&=& \sum_{k=0}^n \sum_{e_1=0}^k z^{k}
 \sum_{{\lambda(\mathbf{e}) \vdash n - k} \atop {\nu^-(\mathbf{e}) \vdash k - b(1)e_1}} \prod_{i \geq 2}  \left( {{a_i+e_i-1} \atop {e_i}} \right)
\nonumber
\\
&=& \sum_{k=0}^n z^{k}
 \sum_{{\lambda(\mathbf{e}) \vdash n - k} \atop {\vert \nu^-(\mathbf{e}) \vert \leq k}} \prod_{i \geq 2}  \left( {{a_i+e_i-1} \atop {e_i}} \right) . \nonumber
\end{eqnarray}

By hypothesis, the parts of $\lambda$ satisfy $c(j) - b(j) \geq (m-1)b(j)$.  
If $k = n-\ell > \lfloor n/m \rfloor$, then $\ell < \lfloor n \frac{m-1}{m} \rfloor$. 
 If $\lambda(\mathbf{e}) \vdash n-k = \ell$, then $\nu^-(\mathbf{e}) \leq \frac{\ell}{m-1} \leq k$.  
 Thus the sum runs over all $\mathbf{e}$ for which $\lambda(\mathbf{e}) \vdash \ell$ with parts $c(j) - b(j)$.  But this is exactly the coefficient of $z^{\ell}$ in the expansion claimed: 
 \[
 \left[ z^{n-\ell}\right]F_n(z) = \left[z^{\ell}\right] \prod_{j \geq 2} \frac{1}{(1-z^{c(j)-b(j)})^{a_j}} \, \text{ .}
 \]
\end{proof}

By the technique of proof, we have a slight improvement in a special case of hand enumerators of prefabs
 (\cite{Bender_Goodman}, \cite[page 92]{Wilf}).

\begin{corollary}
Suppose $a_1=1$.
If  $F_n(z) = [q^n] \prod_{j=1}^\infty (1-zq^j)^{- a_j}$,
 then
  for $k  \geq n/2$, $\left[z^k \right] F_n(z) = \left[z^{k+1} \right] F_{n+1} (z)$.
\end{corollary}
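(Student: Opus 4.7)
The argument is a direct specialization of Theorem~\ref{thm:stabliztion2}, taking $b(j) = 1$ and $c(j) = j$. With this choice the hypothesis (\ref{eq:stable2}) holds with $m = 2$, so part~(a) of the theorem immediately yields $\left[ z^k \right] F_n(z) = \left[ z^{k+1} \right] F_{n+1}(z)$ for every $k > n/2$. Consequently, the only new case needed to extend the bound to $k \geq n/2$ is the borderline case in which $n$ is even and $k = n/2$ exactly.

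For this remaining case, I would retrace the bijective shift from the proof of part~(a). The map sending $\mathbf{e}$ to the sequence $\mathbf{e}'$ with $e_1' = e_1 + 1$ and $e_j' = e_j$ for $j \geq 2$ carries each term contributing to $\left[ z^k \right] F_n(z)$ to a term contributing to $\left[ z^{k+1} \right] F_{n+1}(z)$ with $e_1' \geq 1$, and the weights match because $a_1 = 1$ makes the factor $\binom{a_1 + e_1 - 1}{e_1}$ equal to $1$ for every $e_1$. The remaining task is to show that every contribution to $\left[ z^{k+1} \right] F_{n+1}(z)$ in fact has $e_1 \geq 1$, which makes this map surjective.

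Suppose to the contrary that some contribution has $e_1 = 0$. Then $\mu^{-}(\mathbf{e}) \vdash n+1$ and $\nu^{-}(\mathbf{e}) \vdash n/2 + 1$; together with the bound $|\mu^{-}(\mathbf{e})| \geq 2|\nu^{-}(\mathbf{e})|$ coming from $c(j) \geq 2 b(j)$ for $j \geq 2$, this yields $n + 1 \geq n + 2$, a contradiction. Hence no such sequence exists, the shift is a bijection, and the desired equality of coefficients holds at $k = n/2$ as well.

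No substantive obstacle arises; the slight improvement over the theorem's $k > n/m$ reflects the fact that the shift $n \mapsto n+1$ together with $k \mapsto k+1$ adds an extra unit to each side of the inequality $m(k+1) \leq n+1$, turning the borderline case $k = n/m$ (when it happens to be an integer) into a strict contradiction rather than an equality.
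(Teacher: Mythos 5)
Your proposal is correct and follows exactly the route the paper intends: the paper offers no separate proof, merely remarking that the corollary follows ``by the technique of proof'' of Theorem~\ref{thm:stabliztion2}, and your argument is precisely that technique specialized to $b(j)=1$, $c(j)=j$, with the borderline case $k=n/2$ handled by observing that a contribution to $[z^{k+1}]F_{n+1}(z)$ with $e_1=0$ would force $n+1\geq 2(k+1)=n+2$. Your closing explanation of why the bound improves from $k>n/2$ to $k\geq n/2$ matches the paper's implicit reasoning.
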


By the proof, we find that the generating function for the stabilized coefficients gives an upper bound outside the range of stability.

\begin{corollary}
With the hypotheses of the Theorem,
\[
 \left[ z^{n-\ell}\right]F_n(z) \leq \left[z^{\ell}\right] \prod_{j \geq 2} \frac{1}{(1-z^{c(j)-b(j)})^{a_j}} ,
 \quad 0 \leq \ell \leq n. 
 \]
\end{corollary}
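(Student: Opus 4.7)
The plan is to read off the inequality directly from the second normal form for $F_n(z)$ established in the proof of Theorem~\ref{thm:stabliztion2}, namely
\[
F_n(z) \;=\; \sum_{k=0}^n z^{k}
 \sum_{{\lambda(\mathbf{e}) \vdash n - k} \atop {\vert \nu^-(\mathbf{e}) \vert \leq k}} \prod_{i \geq 2} \binom{a_i+e_i-1}{e_i}.
\]
Setting $k = n-\ell$, the coefficient on the left-hand side of the claimed inequality becomes
\[
[z^{n-\ell}]F_n(z) \;=\; \sum_{{\lambda(\mathbf{e}) \vdash \ell} \atop {\vert \nu^-(\mathbf{e}) \vert \leq n-\ell}} \prod_{i \geq 2} \binom{a_i+e_i-1}{e_i}.
\]

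Next, I would expand the stabilized generating function on the right-hand side exactly as in the theorem:
\[
[z^{\ell}] \prod_{j \geq 2} \frac{1}{(1-z^{c(j)-b(j)})^{a_j}} \;=\; \sum_{\lambda(\mathbf{e}) \vdash \ell} \prod_{i \geq 2} \binom{a_i+e_i-1}{e_i}.
\]
The two expressions now have the same summand, so the only difference is the domain of summation: the left-hand side ranges only over those $\mathbf{e}$ with $\lambda(\mathbf{e}) \vdash \ell$ that additionally satisfy $\vert \nu^-(\mathbf{e})\vert \leq n-\ell$, while the right-hand side ranges over all $\mathbf{e}$ with $\lambda(\mathbf{e}) \vdash \ell$. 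Since every summand $\prod_i \binom{a_i+e_i-1}{e_i}$ is a nonnegative integer, the constrained sum is bounded above by the unconstrained sum, which yields the inequality.

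There is essentially no obstacle; the work has already been done inside the proof of Theorem~\ref{thm:stabliztion2}. In the regime $\ell \leq \lfloor n/m\rfloor$ the constraint $\vert \nu^-(\mathbf{e})\vert \leq n-\ell$ is automatic (this was the content of part (b) of the theorem) and the inequality is an equality; for $\ell > \lfloor n/m\rfloor$ the extra constraint may genuinely exclude some $\mathbf{e}$, so the inequality can be strict, but in either case it holds for $0 \leq \ell \leq n$, giving the desired upper bound.
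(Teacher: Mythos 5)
Your proposal is correct and is exactly the argument the paper intends: the paper gives no separate proof, only the remark that the corollary follows ``by the proof'' of the Theorem, and you have filled in precisely that reasoning --- the rewritten form of $F_n(z)$ expresses $[z^{n-\ell}]F_n(z)$ as the sum over $\mathbf{e}$ with $\lambda(\mathbf{e})\vdash\ell$ subject to the extra constraint $\vert\nu^-(\mathbf{e})\vert\le n-\ell$, and dropping that constraint (all summands being nonnegative) gives the coefficient of the stabilized product. Your closing observation about when the inequality is an equality matches part (b) of the Theorem as well.
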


Variants of stabilization exist in several guises.  If the $b$ grow faster than the bound of the previous theorem, we find that the smaller end of the polynomials stabilize instead of the larger (and $b$ and $c$ staying within a given ratio range will permit both phenomena).  We also note that it is possible for the larger coefficients of a sequence of polynomials to stabilize in periods, i.e., the coefficients match those of a polynomial every 2 or more steps further along.

\begin{theorem}\label{thm:stabilization2}
(a)
Let $\{ b(j)\}$ and $\{ c(j)\}$ be two strictly increasing sequences of integers, positive except that 
$b(1)=0$. 
 Let $F_n(z) = [q^n]\prod_{j=1}^\infty (1-z^{ b(j) } q^{ c(j) })^{  - a_j  }$ with $a_1=1$.
If  there exists a positive integer $m $ such that for all $j \geq 2$
 \[
 (m+1) b(j) > c(j) ,
 \]
then 
\[
[z^k] F_n(z)  = \, [ z^k] F_{n+ c(1)}(z), \quad 0 \leq k \leq n/(m+1).
\]
(b)
Let $a_1=0, a_2=1$. 
If
$F_n(z) = [q^n]\prod_{j=2}^\infty (1-z q^{ j})^{  - a_j  }$, then
$\deg(F_n) = \lfloor n/2 \rfloor$
and
\[
[z^k] F_n(z) = \, [z^{k+1}] F_{n+2}(z) , \quad  k \geq n/3 .
\]
\end{theorem}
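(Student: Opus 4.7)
The plan is to follow the template of the proof of Theorem \ref{thm:stabliztion2}: expand $F_n(z)$ using the explicit form (\ref{eq:take_away}), identify one distinguished factor in the product whose exponent $e_i$ may be incremented by $1$ without changing the binomial product, and verify that the hypothesis forces that same exponent to be positive on the target side, so that the increment is a bijection of summands.

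For part (a), the privileged index is $i = 1$. Since $b(1) = 0$ and $a_1 = 1$, the $i = 1$ binomial coefficient is $\binom{e_1}{e_1} = 1$ and the $j = 1$ factor contributes only to the $q$-weight. The map $e_1 \mapsto e_1 + 1$ then injects summands of $[z^k] F_n(z)$ into those of $[z^k] F_{n+c(1)}(z)$: the $q$-weight increases by $c(1)$, the $z$-weight is preserved, and the binomial product is unchanged. To get surjectivity I must rule out target summands with $e_1' = 0$. Any such $\mathbf{e}'$ has $\mu^-(\mathbf{e}') \vdash n + c(1) > 0$, so some $e_j' > 0$ for $j \geq 2$, and then summing the hypothesis $(m+1) b(j) > c(j)$ against the $e_j'$ gives $k = |\nu^-(\mathbf{e}')| > (n + c(1))/(m+1) > n/(m+1)$, contradicting $k \leq n/(m+1)$.

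For part (b), the same machinery runs with the $j = 2$ index playing the distinguished role: because $a_2 = 1$, incrementing $e_2$ by $1$ again leaves the binomial product intact while shifting the $q$-weight by $2$ and the $z$-weight by $1$, giving an injection of summands from $[z^k] F_n(z)$ into those of $[z^{k+1}] F_{n+2}(z)$. A target summand with $e_2' = 0$ would use only parts of size $\geq 3$, so $n + 2 \geq 3(k+1)$, i.e.\ $k \leq (n-1)/3 < n/3$, contradicting $k \geq n/3$. The degree claim $\deg F_n \leq \lfloor n/2 \rfloor$ is immediate since every $z$-contribution comes from a $q^j$ with $j \geq 2$; equality is realized by taking $e_2 = \lfloor n/2 \rfloor$ when $n$ is even, and (assuming some $a_j > 0$ for odd $j \geq 3$) by supplementing a single odd part when $n$ is odd. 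The main obstacle throughout is only bookkeeping: correctly isolating the ``free'' index whose binomial coefficient collapses to $1$, and checking that the hypothesis in each part is tuned tightly enough to exclude the bad case $e_{\text{free}}' = 0$.
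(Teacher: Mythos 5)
Your proof is correct and follows essentially the same route as the paper: the bijection on exponent sequences given by incrementing the distinguished index ($e_1$ in part (a), $e_2$ in part (b)), with surjectivity secured by showing the hypothesis forces that exponent to be positive on the target side. Your treatment of the degree claim in (b) is actually slightly more careful than the paper's, which asserts $\deg(F_n)=\lfloor n/2\rfloor$ without comment; your caveat that realizing the degree for odd $n$ needs some $a_j>0$ with $j$ odd is a legitimate observation.
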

\begin{proof}
Let ${\bf e}$ be a sequence of non-negative integers with finite support.
By (\ref{eq:take_away}), for the coefficients $[z^k]F_n(z)$ and $[z^k] F_{n+c(1)}(z)$, we need to consider the following two sets of 
finitely supported nonnegative integer sequences:
\begin{eqnarray*}
S_n &=&
 \{ {\bf e} \in {\mathcal F} : | \mu({\bf e}) | = n,  \, | \nu({\bf e} ) | =k \}, 
 \\
S_{n+c(1)} &=&
  \{ {\bf f} \in {\mathcal F} : | \mu({\bf f}) | = n+c(1),  \, | \nu({\bf f} ) | =k \} .
\end{eqnarray*}
We construct a bijection between these two sets. Given ${\bf e} \in S_n$, we take the
corresponding ${\bf f}$ with $f_i=e_i$ for $2 \leq i$ since $b(1)=0$.
Next write out $ | \mu({\bf e}) | = n$
 and $ | \mu({\bf f}) | = n+c(1)$:
\[
n= \sum_{j \geq 1} c(j) e_j, \quad n+c(1) = c(1) f_1 + \sum_{j \geq 2} c(j) e_j .
\]
The term $f_1$ uniquely determines a preimage $e_1$ 
provided $f_1>0$. But $f_1$ must be positive for $k \leq n/(m+1)$ since
\[
\sum_{j\geq 2} c(j) f_j = \sum_{j \geq 2} c(j) e_j < (m+1) \sum_{j \geq 2} b(j) e_j  = (m+1)k  \leq n.
\]
Finally, the coefficients themselves agree; that is,  $[z^k] F_n(z) = [z^k] F_{n+c(1)}(z)$:
\[
  \prod \left\{ \binom{ a_i + e_i -1}{ e_i} : {\bf e} \in S_n \right\} =
  \prod \left\{ \binom{ a_i + f_i -1}{ f_i} : {\bf f} \in S_{n+c(1)} \right\} 
\]
since the above binomial coefficients are all equal for $i \geq 2$ and when $i=1$ they both reduce to $1$ since $a_1=1$.

For part (b), let
$S_n = \{ {\bf e } \in {\mathcal F} :
\mu({\bf e}) \vdash n, \nu({\bf e} ) \vdash k\}$
while
$S_{n+2}=
 \{ {\bf f } \in {\mathcal F} :
\mu({\bf f}) \vdash n+2, \nu({\bf f} ) \vdash k+1\}$.
We can construct a bijection between these two sets as in part (a) provided $f_2 >0$ when $k \geq n/3$.
Assume that $f_2=0$ is possible. Then $\sum f_j = k+1$ while $\sum_{j \geq 3} j f_j = n+2$. On the other hand,
$3(k+1) \geq \sum_{j \geq 3} j f_j $ which yields a contradiction.
\end{proof}

 \section{Partitions With Prescribed Subsums}\label{section:subsums}

Fix a positive integer $m$  and integer $i$ so $1 \leq i \leq m$.  Canfield-Savage-Wilf \cite[Section 3]{CSW} introduced the generating function
\[
G_{m,i}(z,q) = \prod_{j=1}^\infty \, 
\prod_{b=1}^{i-1} \frac{1}{ 1- z^{j-1} q^{ (j-1)m+b}} \, \prod_{b=i}^m \frac{1}{ 1- z^{j} q^{ (j-1)m+b}}
\]
to describe partitions with prescribed subsums. They let
$\Lambda_{m,i}(n,k)$ be the number of partitions $\lambda=(\lambda_1,\cdots, \lambda_n)$ of $n$
such that the sum of those parts $\lambda_j$ whose indices $j$ are congruent to $i$ modulo $m$ is $k$;
that is, 
\[
\sum_{ j :  j \equiv i \, ({\rm mod} \, m)} \lambda_j = k .
\]
Then they found
\[
\sum_{n,k \geq 0} \Lambda_{m,i}(n,k) z^k q^n = G_{m,i}(z,q).
\]

We begin by recovering a result for 
$\Lambda_{2,2}(n,k)$ in \cite[Theorem 1]{CSW} and \cite{Yuri}
by reformulating it  in terms of the generating function
$G_{2,2}(z,q)$  and stabilization of polynomial coefficients.

\begin{proposition}\label{prop:n-mk}
Let $m \geq 2$ and $1 \leq b < m$. Let $G(z,q)= \prod_{j=1}^\infty (1-z^{j-1} q^{(j-1)m+b} )^{-1}$
and $A_n(z) = [q^n] G(z,q)$.  Then for $ 0 \leq k \leq n/(m+1)$  we have
\begin{enumerate}
\item\quad
$[z^k] A_n(z) = [z^k] A_{n+b}(z)$,
\item\quad
if $n-mk$ is not divisible by $b$, then
$[z^k] A_n(z) = 0$,
\item\quad
if $n-mk$ is divisible by $b$ and $bk \leq n/(m+1)$, then
$[z^k] A_n(z) = p(k)$.
\end{enumerate}
\end{proposition}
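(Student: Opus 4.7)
My plan is to apply the explicit formula (\ref{eq:take_away}) directly. With $a_j \equiv 1$, $b(j) = j-1$, and $c(j) = (j-1)m + b$, every binomial factor collapses to $1$, so
\[
[z^k] A_n(z) = \#\{\mathbf{e} \in \mathcal{F} : \textstyle\sum_j ((j-1)m+b)\, e_j = n,\ \sum_{j \geq 2}(j-1)\, e_j = k\}.
\]
Setting $L = \sum_j e_j$, combining the two weight conditions forces $n = mk + bL$, hence $L = (n-mk)/b$. A valid $\mathbf{e}$ thereby corresponds to a partition $\pi$ of $k$ (with $e_j$ parts of size $j-1$ for $j \geq 2$) satisfying $\ell(\pi) \leq L$, together with the padding $e_1 = L - \ell(\pi)$.

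With this dictionary in hand, (2) is immediate: if $b \nmid (n-mk)$, then no non-negative integer $L$ exists and the set of valid $\mathbf{e}$ is empty. For (3), the coefficient counts partitions of $k$ with at most $L$ parts; the hypothesis $bk \leq n/(m+1)$ yields $(m+1)bk \leq n$, so $(m+b)k \leq (m+1)bk \leq n$ (using $b \geq 1$), i.e.\ $L \geq k$. Since every partition of $k$ has at most $k$ parts, the count collapses to $p(k)$. For (1) I would case-split on divisibility: if $b \nmid (n-mk)$, then equivalently $b \nmid (n+b-mk)$, and both coefficients vanish by (2); if $b \mid (n-mk)$, the same dictionary identifies $[z^k]A_n(z)$ with the number of partitions of $k$ into at most $L$ parts and $[z^k]A_{n+b}(z)$ with the corresponding count for $L+1$ parts, and these agree precisely when $L \geq k$.

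The main obstacle I anticipate is securing $L \geq k$ throughout the range announced in (1). The bound $k \leq n/(m+1)$ only gives $(m+1)k \leq n$, which coincides with the required $(m+b)k \leq n$ when $b = 1$ but is strictly weaker for $b \geq 2$; in the intermediate region where $b \mid (n-mk)$ yet $L < k$, the two partition counts actually differ. The cleanest way I see to square this with the stated range is to read (1) as a direct consequence of (2) and (3): inherit the tighter inequality $bk \leq n/(m+1)$ from (3) (which does force $L \geq k$), so that either both coefficients vanish by the divisibility in (2) or both equal $p(k)$ by (3). In effect, Parts (2) and (3) carry the real content and (1) is their combined corollary.
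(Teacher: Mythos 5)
Your combinatorial dictionary is exactly right, and for parts (2) and (3) your argument is essentially the paper's own: the authors likewise derive $n = mk + b\sum_j e_j$, note that once a partition of $k$ into parts $j-1$ ($j\ge 2$) is fixed the only remaining freedom is $e_1 = (n-mk)/b - \sum_{j\ge 2}e_j$, and secure $e_1\ge 0$ from $bk\le n/(m+1)$ via the same chain $(m+b)k \le (m+1)bk \le n$. So (2) and (3) are correct and match the paper.

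Where you diverge is part (1), and your suspicion there is not a defect of your argument but a genuine error in the Proposition as stated. The paper disposes of (1) by calling it a direct consequence of Theorem \ref{thm:stabilization2}(a); but that theorem's hypothesis $(m+1)b(j) > c(j)$ here reads $(m+1)(j-1) > (j-1)m+b$, i.e.\ $j-1>b$, which fails for $j=2,\dots,b+1$ (even the non-strict version, which the proof of Theorem \ref{thm:stabilization2} actually supports, covers only $b=1$). Your observation that the count of partitions of $k$ into at most $L$ parts differs from the count into at most $L+1$ parts exactly when $0\le L\le k-1$ pins down the true range: (1) holds precisely for $k\le n/(m+b)$, and for $b\ge 2$ it genuinely fails inside the stated range. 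Concretely, take $m=3$, $b=2$, $n=8$, $k=2=n/(m+1)$: then $L=1$, and indeed $[z^2]A_8(z)=1$ (only $(z^2q^8)^1$ contributes) while $[z^2]A_{10}(z)=2$ (both $z^2q^8\cdot q^2$ and $(zq^5)^2$ contribute). Your proposed repair, inheriting the hypothesis $bk\le n/(m+1)$ from part (3), is sufficient though slightly stronger than the sharp bound $k\le n/(m+b)$; and since the paper only ever invokes the Proposition with $b=1$, where $n/(m+b)=n/(m+1)$, the error does not propagate to later results. In short, your proof is correct and is more careful than the paper's on point (1).
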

\begin{proof}
The first part is a direct consequence of the last theorem.

For part (2), in Theorem \ref{thm:stabilization2} let $c(j) = m(j-1)+b$ and $b(j)=j-1$.  When all $a_j =1$, $[z^j]F_n(z)$ is the number of all ${\bf e} \in {\mathcal F}$ such that $\mu({\bf e}) \vdash n$ and $\nu({\bf e}) \vdash k$.
Consider
\begin{eqnarray}\label{eq:n-mk}
n
&=&| \mu({\bf e})| = \sum_{j \geq 1} e_j [ m(j-1) + b] 
\nonumber
\\
&=&
m \sum_{j \geq 1} e_j (j-1) + b  \sum_{j \geq 1} e_j 
\nonumber
\\
&=&
mk + b  \sum_{j \geq 1} e_j .
\end{eqnarray}
Hence $n-mk$ must be divisible by $b$ for any nonzero choice of {\bf e}.

For part (3),
assume $0 \leq bk \leq n/(m+1)$ and that $n-mk$ is divisible by $b$. 
Let {\bf e} be any solution to $\nu( {\bf e})=k$. Note that this does not give
a constraint for the choice of $e_1$. On the other hand, the choice of $e_1$ by (\ref{eq:n-mk}) must be
\[
e_1 = (n-mk)/b - \sum_{j\geq 2} e_j, \quad e_1 \geq 0
\]
to yield $\mu ({\bf e}) = n$. 
Hence, $0\leq [z^k] F_n(z) \leq p(k)$.
Finally,
the inequality $0 \leq bk \leq n/(m+1)$ shows that $e_1\geq 0$ always holds. Hence, 
$[z^k] F_n(z)=p(k)$.
\end{proof}

 \begin{theorem}
  Let  $G_A(z,q)=\prod_{k=1}^\infty ( 1-z^{k} q^{2k-1} )^{-1}$
  and $G_B(z,q)= \prod_{k=1}^\infty ( 1-z^{k} q^{2k} )^{-1}$, so $G_{2,2}(z,q) = G_A(z,q) G_B(z,q)$. Then the coefficients of the polynomials
 $F_n(z)= [q^n]  G_{2,2}(z,q)$ satisfy
  \[
   [z^{k}] F_n(z) = [z^{k}] F_{n+1}(z) = \sum_{\ell=0}^k p(\ell) p(k-\ell), \quad 0 \leq k \leq n/3,
  \]
  where $p(\ell)$ is the number of partitions of $\ell$, as usual.
  \end{theorem}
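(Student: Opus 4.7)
The plan is to exploit the factorization $G_{2,2}=G_A G_B$ through the Cauchy convolution $F_n(z)=\sum_{i+j=n} A_i(z)\,B_j(z)$, where $A_i(z):=[q^i]G_A$ and $B_j(z):=[q^j]G_B$. I will compute $B_j$ directly from its very simple shape, and read off $A_i$ from Proposition \ref{prop:n-mk}.

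For $B_j$, I would first note that every factor of $G_B$ depends only on the combined variable $u=zq^2$, so $G_B=\prod_{k\geq 1}(1-u^k)^{-1}=\sum_{k\geq 0}p(k)u^k$; translating back, $B_{2k}(z)=p(k)z^k$ and $B_j(z)=0$ for $j$ odd. For $A_i$, the generating function $G_A$ fits the setup of Proposition \ref{prop:n-mk} with $m=2$ and $b=1$. The divisibility condition of part~(2) is automatic since $b=1$, and part~(3) gives $[z^k]A_n(z)=p(k)$ for every $0\leq k\leq n/3$.

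Substituting these pieces into the convolution, the only surviving summands have even second index, giving
\[
F_n(z)=\sum_{k\geq 0} p(k)\, z^k\, A_{n-2k}(z),\qquad [z^\ell] F_n(z)=\sum_{k=0}^{\ell} p(k)\, [z^{\ell-k}] A_{n-2k}(z).
\]
To invoke the stabilization of the $A_{n-2k}$ inside the inner sum, I would verify that, for all $0\leq \ell\leq n/3$ and $0\leq k\leq \ell$, the coefficient $[z^{\ell-k}]A_{n-2k}(z)$ lies in the stable window, i.e.\ $\ell-k\leq (n-2k)/3$; this rearranges to $3\ell\leq n+k$, which is automatic from $3\ell\leq n$. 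Hence each inner coefficient equals $p(\ell-k)$, yielding the claimed $\sum_{k=0}^{\ell} p(k)p(\ell-k)$, and the equality $[z^\ell] F_n(z)=[z^\ell] F_{n+1}(z)$ follows either because the same Cauchy product appears on both sides, or by substituting $[z^{\ell-k}]A_{n-2k}=[z^{\ell-k}]A_{n-2k+1}$ from part~(1) of the proposition into the convolution for $F_{n+1}$. The main delicate point is precisely this uniformity check across $k$; it is the only place where the hypothesis $\ell\leq n/3$ is used, and where an off-by-one would shrink the claimed range.
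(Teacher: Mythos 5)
Your proof is correct and follows essentially the same route as the paper's: the same Cauchy convolution $F_n(z)=\sum_\ell A_{n-2\ell}(z)B_{2\ell}(z)$, the same closed form $B_{2k}(z)=p(k)z^k$ with $B_{\mathrm{odd}}=0$, the same appeal to Proposition~\ref{prop:n-mk} with $m=2$, $b=1$, and the same uniformity check that $\ell-k\le (n-2k)/3$ holds throughout the inner sum. (Like the paper, you implicitly read $G_A$ as $\prod_{k\ge 1}(1-z^{k-1}q^{2k-1})^{-1}$ --- the factor that actually arises from $G_{2,2}$ and matches the proposition's hypotheses --- rather than the $z^{k}$ printed in the statement.)
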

  \begin{proof} 
 Let $A_n(z)=[q^n]G_A(z,q)$ and $B_n(z) = [q^n]G_B(z,q)$. The generating function $G_B(z,q)$ has the explicit expansion
  \[
  G_B(z,q) = \sum_{j=0}^\infty p(j) z^j q^{2j} 
  \]
  so $B_{2k+1}(z)=0$ while $B_{2k}(z) = p(k) z^k$.
  We also know by Proposition \ref{prop:n-mk} that
  \[
  [z^j] A_n(z) = p(j), \quad 0 \leq j \leq n/3.
  \]
  Next we have that
  \[
  F_n(z) = \sum_{\ell=0}^{n/2} A_{n-2\ell}(z) B_{2\ell}(z) = \sum_{\ell=0}^{n/2}  p(\ell) z^\ell A_{n-2\ell}(z)
  \]
  Examine the coefficient $[z^k]F_n(z)$ for $0 \leq k \leq n/3$:
  \[
  [z^k]F_n(z) = [z^k] \sum_{\ell=0}^{n/2}  p(\ell) z^\ell A_{n-2\ell}(z)
  =\sum_{\ell=0}^{n/2} p(\ell) \, [z^{k-\ell} ] A_{n-2\ell}(z)
  \]
  Since $k-\ell \leq (n-2\ell)/3$ for $0\leq \ell \leq k$ and $k\leq n/3$,
  we find    $[z^{k - \ell}] A_{n - 2 \ell}(z) = p(k - \ell)$.
  We conclude that
    \[
  [z^k]F_n(z)= \sum_{\ell=0}^{k} p(\ell) \, p( k - \ell) .
  \]
   \end{proof}

In order to investigate more general cases for the
 polynomials $F_n(z) = [q^n] G_{m,i}(z,q)$ it is convenient to rewrite the generating function as
\begin{equation}\label{eq:new_eq}
G_{m,i}(z,q)=
\left( \prod_{b=1}^{i-1} \frac{1}{1-q^b} \, \right) \,
\prod_{a=1}^\infty \, \prod_{d=0}^{m-1} \frac{1}{ 1-z^a q^{ i+ (a-1) m + d}}.
\end{equation}
We wish to find a useful form for the coefficient $[z^j] F_n(z)$.  As usual, we have
\begin{eqnarray*}
[q^n] G_{m,i}(z,q)
&& = 
\sum_{s=0}^n [q^{n-s}] \left( \prod_{b=1}^{i-1} \frac{1}{1-q^b} \, \right) \, 
[q^s] \prod_{a=1}^{\infty} \prod_{d=0}^{m-1} \frac{1}{ 1- z^{a} q^{i+ (a-1)m+d}}
\\
&&= \quad
\sum_{s=0}^n [q^{n-s}] \left( \prod_{b=1}^{i-1}\, 
 \sum_{ f_b=0}^\infty \,  \left(q^b \right)^{f_b} \, \right) 
\\
&&
\qquad
\times \quad
[q^s] \prod_{a=1}^{\infty} \prod_{d=0}^{m-1} \, \sum_{ f_{(a,d)}=0}^\infty 
\left(z^{a} q^{ i+(a-1)m+d}  \right)^{f_{(a,d)}} 
\end{eqnarray*}
A typical term of the above expansion is indexed by a pair of partitions $(\rho, \mu)$
where $\rho \vdash n-s$ where each part of $\rho$ is $<i$ while $\mu \vdash s$ where
each part of $\mu$ is $\geq i$. We write the parts of $\mu$ as $\mu_{(a,d)}$ with multiplicity
$e_{\mu(a,d)}$ where $a \geq 1$ and $0 \leq d < m$. By construction, we find that
\[
s = \sum_{a,d} e_{\mu(a,d)} \mu_{(a,d)}.
\]
while, of course, no part of $\rho$ has a contribution to $z$.

Each part $\mu_{(a,d)}$ of $\mu$ with multiplicity $e_{\mu(a,d)}$ contributes the power of $z$
\[
 \left( z^a \right)^{e_{\mu(a,d)}} = z^{ a e_{ \mu(a,d)}},
\]
so the total contribution from $\mu$ is
\[
\sum_{a,d} a e_{\mu(a,d)}.
\]

We prove a few lemmas in preparation for the next theorem.

\begin{lemma}
\begin{eqnarray}\label{eq:coeff_F}
\lefteqn{
[z^j] F_n(z)
}
\nonumber
\\
&& = \sum_{s=ji}^n  p(n-s, < i) \,  \#\{  \mu \vdash s :   \textrm{all parts of } \mu \textrm{ are } \geq  i, \sum_{a,d} a e_{\mu(a,d)}=j   \}
\end{eqnarray}
where $p(n-s, < i)$ denotes the number of all partitions of $n-s$ all of whose parts are strictly less than $i$.
\end{lemma}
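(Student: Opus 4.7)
The plan is a direct bookkeeping argument: extract the coefficient of $q^n z^j$ from the factored generating function in (\ref{eq:new_eq}), using that only the double product on the right carries any $z$-dependence. First I would write
\[
G_{m,i}(z,q) = H(q) \cdot K(z,q), \qquad H(q) = \prod_{b=1}^{i-1}\frac{1}{1-q^b}, \quad K(z,q) = \prod_{a \geq 1} \prod_{d=0}^{m-1}\frac{1}{1-z^a q^{i+(a-1)m+d}},
\]
and take the Cauchy product in $q$: $[q^n] G_{m,i}(z,q) = \sum_{s=0}^n [q^{n-s}] H(q) \cdot [q^s] K(z,q)$. Since $H(q)$ is independent of $z$, applying $[z^j]$ only affects the second factor, and $[q^{n-s}] H(q) = p(n-s, <i)$ by the standard partition interpretation.

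Next I would expand $K(z,q)$ as a sum over partitions. The parts $i + (a-1)m + d$ with $a \geq 1$ and $0 \leq d < m$ biject with the integers $\geq i$, so specifying a monomial in the expansion of $K(z,q)$ amounts to specifying a partition $\mu$ all of whose parts are $\geq i$, indexed as $\mu_{(a,d)}$ with multiplicity $e_{\mu(a,d)}$. The $q$-weight of such a monomial is $\sum_{a,d}(i+(a-1)m+d)\,e_{\mu(a,d)} = |\mu|$, and, from the factor $z^a$ attached to the part indexed by $(a,d)$, the $z$-weight is $\sum_{a,d} a\,e_{\mu(a,d)}$. Therefore
\[
[z^j]\,[q^s] K(z,q) = \#\Bigl\{ \mu \vdash s : \text{all parts } \geq i,\ \sum_{a,d} a\,e_{\mu(a,d)} = j \Bigr\},
\]
which assembled with the $H(q)$ factor gives the claimed identity, except for the lower summation limit $s \geq ji$.

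The one step that requires a small inequality is the lower bound $s \geq ji$. Here I would observe that for any part indexed $(a,d)$ we have $i + (a-1)m + d \geq ia$, which rearranges to $(a-1)(m-i) + d \geq 0$ and holds because $m \geq i$, $a \geq 1$, and $d \geq 0$. Summing over parts with their multiplicities gives $s = |\mu| \geq i \sum_{a,d} a\,e_{\mu(a,d)} = ij$, so the set counted in the sum is empty for $s < ji$ and can be dropped from the summation range. This inequality is really the only non-routine ingredient; the rest is just unpacking the Cauchy product and the infinite-product expansion already set up in the paragraph preceding the lemma.
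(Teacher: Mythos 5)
Your proposal is correct and follows essentially the same route as the paper: the identity is read off directly from the Cauchy product and the partition expansion of the $z$-dependent factor, and the only substantive point is the lower summation limit, which both you and the paper obtain from the part-wise inequality $i+(a-1)m+d \geq ia$ (valid since $m \geq i$). No further comment is needed.
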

\begin{proof}
This expression for $[z^j] F_n(z)$ follows directly from the generating function except for the
range of summation.
Let $\mu$ be a partition of $s$. Then we have
\begin{eqnarray*}
s &=& \sum_{a,d} [ i + m (a-1) + d] e_{\mu(a,d)}
\\
&\geq& \sum_{a,d} [ i + i (a-1) ] e_{\mu(a,d)}
\\
&=&
i \sum_{a,d} a e_{\mu(a,d)} = ij .
\end{eqnarray*}
\end{proof}

Let $r$ denote the number of  parts of $\mu$ while $t$ denotes  the number of parts with $a\geq 2$ so $r-t$ gives the number of 
parts with $a=1$. Note that
\[
r = \sum_{a,d} e_{\mu(a,d)}.
\]

\begin{lemma}  Given the partition $\mu$, we have the bound $j-r \geq t$, and can rewrite $s$ as 
$$s = i r + m (j-r) + \sum_{a,d} d e_{\mu(a,d)}.$$
\end{lemma}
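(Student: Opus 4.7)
The plan is to obtain both assertions by a direct bookkeeping computation from the definitions already in place, without invoking any deeper machinery. Throughout, recall that each indexed part has the form $\mu_{(a,d)} = i + m(a-1) + d$ with $a \geq 1$ and $0 \leq d < m$, and that the identities $r = \sum_{a,d} e_{\mu(a,d)}$ and $j = \sum_{a,d} a\, e_{\mu(a,d)}$ have already been established in the paragraphs preceding the lemma.

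For the rewriting of $s$, I would simply expand the definition of $|\mu| = s$ and separate the three additive contributions $i$, $m(a-1)$, and $d$:
\[
s \;=\; \sum_{a,d} \bigl[i + m(a-1) + d\bigr]\, e_{\mu(a,d)} \;=\; i \sum_{a,d} e_{\mu(a,d)} \;+\; m \sum_{a,d} (a-1)\, e_{\mu(a,d)} \;+\; \sum_{a,d} d\, e_{\mu(a,d)}.
\]
The first sum is $ir$ by the definition of $r$, and the middle sum equals $m(j-r)$ upon noting that $\sum (a-1) e_{\mu(a,d)} = \sum a\, e_{\mu(a,d)} - \sum e_{\mu(a,d)} = j - r$. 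This immediately yields the stated identity.

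For the inequality $j - r \geq t$, I would use the same rewriting $j - r = \sum_{a,d} (a-1)\, e_{\mu(a,d)}$. Parts with $a = 1$ contribute $0$, while each part with $a \geq 2$ contributes a coefficient $a-1 \geq 1$. Therefore
\[
j - r \;=\; \sum_{a \geq 2,\, d} (a-1)\, e_{\mu(a,d)} \;\geq\; \sum_{a \geq 2,\, d} e_{\mu(a,d)} \;=\; t,
\]
where the last equality is the definition of $t$ as the number of parts with $a \geq 2$ (counted with multiplicity).

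There is no genuine obstacle here; the lemma is a purely notational repackaging of the data already unpacked above the statement. The only thing to be careful about is distinguishing between the two equivalent ways of measuring $j - r$ (as the $z$-exponent minus the number of parts, vs.\ as a weighted count of ``large'' parts), since this equivalence is what makes the bound $j - r \geq t$ transparent and will be the form used in the subsequent theorem.
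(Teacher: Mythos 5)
Your proposal is correct and matches the paper's own proof essentially line for line: both expand $s = \sum_{a,d}[i+m(a-1)+d]\,e_{\mu(a,d)}$, identify the three sums with $ir$, $m(j-r)$, and the residual $d$-sum, and obtain $j-r \geq t$ from $\sum_{a,d}(a-1)e_{\mu(a,d)} \geq \sum_{a\geq 2,\,d} e_{\mu(a,d)} \geq t$. No differences worth noting.
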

\begin{proof}
Since $\mu \vdash s$, we find that
\begin{eqnarray*}
s &=&
\sum_{a,d} e_{\mu(a,d)} \, [ i + (a-1) m + d ]
\\
&=&
i \, \sum_{a,d} e_{\mu(a,d)}  + m \sum_{a,d} (a-1) e_{\mu(a,d)} + \sum_{a,d} d e_{\mu(a,d)}
\\
&=&
i r + m (j-r) + \sum_{a,d} d e_{\mu(a,d)}.
\end{eqnarray*}
For the bound, consider
\[
j-r = \sum_{a,d} (a-1) e_{\mu(a,d)} \geq \sum_{a \geq 2, d} e_{\mu(a,d)} \geq t.
\]

\end{proof}

\begin{lemma}
Let $\mu$ be a partition of $s$ all of whose parts are $\geq i$. If
$m \geq i+2$ and
$j > s /(i+1)$, then $\mu$ must have at least one part of size $i$.
\end{lemma}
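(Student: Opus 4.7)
The plan is to prove the contrapositive: if $\mu$ has no part equal to $i$ (i.e.\ no part with $a=1$ and $d=0$), then $j \leq s/(i+1)$. Using the expression $s = \sum_{a,d} [i + (a-1)m + d]\, e_{\mu(a,d)}$ together with $j = \sum_{a,d} a\, e_{\mu(a,d)}$, I would compute
\[
s - (i+1)j \;=\; \sum_{a,d} \bigl[\, a(m-i-1) + d - (m-i)\, \bigr]\, e_{\mu(a,d)}
\]
and then verify that each bracketed coefficient is nonnegative under the standing hypothesis $m \geq i+2$, provided the part $(a,d) = (1,0)$ does not appear.

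The verification splits into cases on $(a,d)$. For $a=1, d \geq 1$ the bracket equals $(m-i-1) + d - (m-i) = d-1 \geq 0$. For $a=2, d \geq 0$, the bracket is $2(m-i-1) + d - (m-i) = (m-i-2) + d \geq 0$ exactly because $m \geq i+2$; this is the tight case that actually consumes the hypothesis. For $a \geq 3$, the bracket is at least $3(m-i-1) - (m-i) = 2m - 2i - 3 \geq 1$, again using $m \geq i+2$. So the only case that could contribute a negative term is $(a,d)=(1,0)$, which we have assumed absent.

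Therefore $s \geq (i+1)j$, contradicting $j > s/(i+1)$. Hence some part must equal $i$, as claimed.

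The whole proof is essentially one linear-algebraic inequality, so there is no serious obstacle; the only thing to be careful about is making sure the boundary case $a=2, d=0$ is handled, which is precisely where the assumption $m \geq i+2$ (rather than $m \geq i+1$) is needed. This also explains why the hypothesis appears in this particular form.
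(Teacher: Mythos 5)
Your proof is correct, and it reaches the same key inequality as the paper --- namely that if no part of $\mu$ equals $i$ then $s \geq (i+1)j$ --- but by a cleaner route. The paper first proves an auxiliary identity $s = ir + m(j-r) + \sum_{a,d} d\, e_{\mu(a,d)}$ (where $r$ is the number of parts and $t$ the number of parts with $a \geq 2$) together with the bound $j - r \geq t$, and then chains several aggregate inequalities: $s \geq ir + (i+2)(j-r) + (r-t) \geq (i+1)j$. You instead write $s - (i+1)j$ directly as a single linear combination $\sum_{a,d}\bigl[a(m-i-1) + d - (m-i)\bigr]e_{\mu(a,d)}$ and check termwise that every coefficient is nonnegative except at $(a,d)=(1,0)$; the algebra checks out in all three cases ($a=1$ gives $d-1$, $a=2$ gives $m-i-2+d$, $a\geq 3$ gives at least $2(m-i)-3\geq 1$). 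Your version dispenses with the intermediate quantities $r$ and $t$ and with the preceding lemma entirely, and it isolates exactly where the hypothesis $m \geq i+2$ is consumed (the tight case $a=2$, $d=0$), which the paper's aggregated argument obscures. The trade-off is minor: the paper's decomposition via $r$ and $t$ is reused conceptually elsewhere in the section, whereas your computation is self-contained but single-purpose.
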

\begin{proof}
We now assume that $m \geq i+2$ and that the part $i$ does not appear in $\mu$.
In particular,  for any part of $\mu$ of the form $i +d$, with $a=1$, $d$ must be strictly positive.
Then we have a refinement of the above bounds:
\begin{eqnarray*}
s &\geq&
i r + m (j-r) + \sum_{a,d} d e_{\mu(a,d)}
\geq
i r + (i+2) (j-r) + \sum_{ d} d e_{1,d}
\\
&\geq&
i r + (i+2) (j - r) + (r-t)
\\
&\geq&
 i r + (i+1) (j - r) + t + (r-t)
 \\
 &=&
 (i+1) [ r + (j-r)] = (i+1) j.
\end{eqnarray*}
Hence the partition $\mu$ must have $i$ as a part; otherwise, $(i+1)j >s$  which contradicts our assumption.
\end{proof}

With these preparations, we will show that

\begin{theorem}\label{thm:subsums}
Let $m \geq 1$ and $1 \leq i \leq m$. Set $F_n(z) = [ q^n] G_{m,i}(z,q)$ where
$G_{m,i}(z,q)$ is given by (\ref{eq:new_eq}).
 If $m>i+1$  and $j> \frac{n}{i+1}$, then 
\[
[z^j] F_n(z) = [z^{j-1}] F_{n-i}(z)  .
\]
\end{theorem}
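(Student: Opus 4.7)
The plan is to apply the three preparatory lemmas directly and observe that the hypothesis $j > n/(i+1)$ forces every relevant partition $\mu$ to contain a part of size exactly $i$, which then provides a bijection that matches $[z^j]F_n(z)$ with $[z^{j-1}]F_{n-i}(z)$.

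First I would start from formula~(\ref{eq:coeff_F}) expressing
\[
[z^j] F_n(z) = \sum_{s=ji}^n p(n-s,<i)\; N(s,j),
\]
where $N(s,j)$ denotes the number of partitions $\mu \vdash s$ with all parts $\geq i$ and $\sum_{a,d} a\, e_{\mu(a,d)} = j$. Since we are summing over $s \leq n$ and $j > n/(i+1) \geq s/(i+1)$, the third lemma (using $m \geq i+2$, i.e.\ $m > i+1$) applies to every $\mu$ appearing in the sum and guarantees that $\mu$ contains at least one part of size $i$, which corresponds to the indices $a=1,\, d=0$.

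Next I would construct the bijection. Removing one part of size $i$ from $\mu$ produces a partition $\mu' \vdash s-i$ whose parts are all still $\geq i$; in terms of multiplicities, only $e_{\mu(1,0)}$ decreases by $1$, so the weighted sum $\sum_{a,d} a\, e$ drops by exactly $a=1$, becoming $j-1$. Conversely, prepending a part of size $i$ to any partition counted by $N(s-i,j-1)$ recovers a partition counted by $N(s,j)$. Hence $N(s,j) = N(s-i,j-1)$ under the standing hypotheses.

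Substituting this identity and reindexing with $s' = s-i$ (noting that $p(n-s,<i) = p(n-i-s',<i)$ and that the bounds $ji \leq s \leq n$ become $(j-1)i \leq s' \leq n-i$) yields
\[
[z^j]F_n(z) = \sum_{s'=(j-1)i}^{n-i} p(n-i-s',<i)\; N(s',\,j-1) = [z^{j-1}]F_{n-i}(z),
\]
which is the desired equality. The only step that requires any care is verifying that the lemma's hypothesis $j > s/(i+1)$ is inherited from $j > n/(i+1)$ uniformly across all $s$ in the summation range; this is immediate from $s \leq n$, so no real obstacle arises once the three lemmas are in hand.
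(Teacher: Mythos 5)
Your proposal is correct and follows essentially the same route as the paper: it invokes the identity (\ref{eq:coeff_F}), applies the third lemma (noting $s \leq n$ gives $j > s/(i+1)$, and $m > i+1$ is the same as $m \geq i+2$ for integers) to guarantee a part of size $i$ in every relevant $\mu$, and then uses the delete/insert-a-part-of-size-$i$ bijection with the reindexing $s' = s - i$. No gaps.
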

\begin{proof}
By (\ref{eq:coeff_F}), we need to show that
\begin{eqnarray*}
&&
 \sum_{s=ji }^{ n}  p(n-s, < i) \, \# \{   \mu \vdash s :  \textrm{all parts of } \mu \textrm{ are } \geq  i, \sum_{a,d} a e_{\mu(a,d)}=j   \}
 \\
 &&
 =
  \sum_{s'= (j-1) i }^{n- i }  p(n-s'-i, < i) \,
  \\
  &&
  \quad
  \times  \quad
  \# \{    \nu \vdash s'  :  \textrm{all parts of } \nu \textrm{ are } \geq  i, \sum_{a,d} a e_{\nu(a,d)}=j  -1 \}
\end{eqnarray*}
These two coefficients are equal provided we construct a bijection $T$ between
\begin{eqnarray*}
U_s &=&  \{   \mu \vdash s :  \textrm{all parts of } \mu \textrm{ are } \geq  i, \sum_{a,d} a e_{\mu(a,d)}=j   \}
 \\
 V_{s'}
 &=&
  \{    \nu \vdash s'  :  \textrm{all parts of } \nu \textrm{ are } \geq  i, \sum_{a,d} a e_{\nu(a,d)}=j  -1 \}.
\end{eqnarray*}
when $s' = s-i$. Let $\mu \in U_s$. By Lemma 7, the partition $\mu$ must have a part equal to $i$. 
Let $\nu=T(\mu)$ be the partition of $s-i$ obtained by deleting one part from $\mu$ of size $i$. It is easy
to verify that $\nu \in V_{s'}$. The inverse of $T$ is simply adding a part of size $i$ to $\nu \in V_{s'}$.
 \end{proof}
 
In other words, Theorem \ref{thm:subsums} shows that if $m>i+1$ and $j> n/(i+1)$, then the subsums satisfy
$\Lambda_{m,i}(n,j)= \Lambda_{m,i}(n-i,j-1)$.

\section{Laurent Type Polynomials }\label{section:laurent}

A more general case consists of generating functions that involve $z$ raised to different powers; 
ultimately, we might treat the case of the generating function 
\[
G(z,q) = \prod_{(i,j) \in \mathbb{Z}^2 \setminus (0,0)} {(1 - z^i q^j)}^{a_{ij}} \, .
\]

An important example comes from the generating function for the crank statistic for partitions. 
Let
\[
C(z,q) = \prod_{k\geq 1} \frac{1-q^k}{(1-z q^k)(1-z^{-1} q^k)} = \sum_{n=0}^\infty \, M_n(z) q^n 
\]
where $M_n(z)$ is a symmetric Laurent polynomial.
From the definition of the crank, the coefficient of $z^{n-k}$ in $M_n(z)$,
for $ k \leq \frac{n}{2}$, equals  the number of partitions of $k$ that include no 1s.
In particular, the coefficients of $M_n(z)$ stabilize in the ranges for powers $n-k$ and $-n+k$
for $0\leq k \leq \lfloor n/2 \rfloor$.  It is suggested in \cite{BG2} that the zeros  for the crank polynomial converge to
the unit circle. 
Another example is  the generating
function
\[
\prod_{k=1}^\infty \frac{ 1}{ (1-z q^k)^{k}} \, \frac{1}{ (1-z^{-1} q^k)^k } \, \frac{1}{ (1-q^k)^{2k}}.
\]
which comes from the Donaldson-Thomas Theory in algebraic geometry and whose asymptotics were studied in   \cite{siam10}.

\begin{lemma}\label{thm:freezing1}
 Let $\{A_n(z)\}_{n=0}^\infty$ be a sequence of polynomials, such that the degree of $A_n(z)$ is $n$, whose coefficients satisfy
 \[
 [z^{n-k}] A_n(z) = [ z^{n+1-k}]A_{n+1}(z), \quad 0 \leq k \leq n/m
 \]
 for some integer $m \geq 2$.
 Let $\{B_n(z)\}_{n=0}^\infty$ be another sequence of polynomials. Then the coefficients of the 
 polynomial sequence $\{F_n(z)\}_{n=0}^\infty$ 
 where
 \[
 F_n(z) = \sum_{\ell=0}^n \, A_{\ell}(z) B_{n-\ell}(z^{-1})
 \]
 also satisfy
 \[
 [z^{n-k}] F_n(z) = [ z^{n+1-k}]F_{n+1}(z),  \quad 0 \leq k \leq n/m.
 \]
  \end{lemma}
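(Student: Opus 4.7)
The plan is to expand $F_n(z)$ as a double sum, reindex it so that the coefficient $[z^{n-k}]F_n(z)$ appears as a sum of particular entries $a_{\ell,i}b_{n-\ell,j}$, and then apply the stability hypothesis on $A_n$ term-by-term to identify the sum with $[z^{n+1-k}]F_{n+1}(z)$. Writing $A_\ell(z) = \sum_i a_{\ell,i} z^i$ and $B_s(z) = \sum_j b_{s,j}z^j$, the Laurent polynomial $A_\ell(z)B_{n-\ell}(z^{-1})$ contributes to the coefficient of $z^{n-k}$ only through pairs $(i,j)$ with $i - j = n-k$ and $i \leq \ell$ (by the degree constraint $\deg A_\ell = \ell$). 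The latter forces $\ell \geq n - k$, so I will set $\ell = n-k+r$ with $0 \leq r \leq k$ and $j$ satisfying $0 \leq j \leq r$.

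Under this reindexing I obtain
\[
[z^{n-k}]F_n(z) = \sum_{r=0}^{k}\sum_{j=0}^{r} a_{\,n-k+r,\,n-k+j}\, b_{\,k-r,\,j},
\]
and an identical computation with $n$ replaced by $n+1$ gives
\[
[z^{n+1-k}]F_{n+1}(z) = \sum_{r=0}^{k}\sum_{j=0}^{r} a_{\,n+1-k+r,\,n+1-k+j}\, b_{\,k-r,\,j}.
\]
The $B$ factors match term-by-term, so the identity will follow once I check that the hypothesis on $\{A_n\}$ yields $a_{n-k+r,\,n-k+j}=a_{n+1-k+r,\,n+1-k+j}$ for every relevant $(r,j)$. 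Setting $N = n-k+r$ and $K = r-j \geq 0$, this is exactly the statement $[z^{N-K}]A_N(z) = [z^{N+1-K}]A_{N+1}(z)$, which the hypothesis provides whenever $K \leq N/m$, i.e., $mK \leq n-k+r$.

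The main (though ultimately routine) obstacle is to verify that this range condition on $K$ actually covers all $(r,j)$ that appear in the sum. Since $K = r-j \leq r$, it suffices to show $mr \leq n-k+r$, or equivalently $(m-1)r \leq n-k$. Using $r \leq k$ and the hypothesis $k \leq n/m$ (so $mk \leq n$, hence $(m-1)k \leq n-k$), I get $(m-1)r \leq (m-1)k \leq n-k$, as needed. Thus the coefficient-wise identification is valid throughout the sum, proving $[z^{n-k}]F_n(z) = [z^{n+1-k}]F_{n+1}(z)$ for $0 \leq k \leq n/m$.
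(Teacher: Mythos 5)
Your proof is correct and follows essentially the same route as the paper's: expand $F_n$ as a double sum over $\ell$ and the exponent contributed by $B$, use $\deg A_\ell = \ell$ to restrict to $\ell \geq n-k$, shift $\ell$ by one, and match terms via the stability hypothesis on $\{A_n\}$. If anything, your verification that $(m-1)r \leq n-k$ puts every needed pair inside the hypothesis's range $K \leq N/m$ is more explicit than the paper's, which simply asserts that the assumption applies for $n-k \leq \ell \leq n$.
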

 \begin{proof}
 Let $0 \leq k \leq \lfloor n/m \rfloor$. Consider $[z^{n-k}] F_n(z)$. We have
 \begin{eqnarray}
 [z^{n-k}] F_n(z)
 &=&
 [z^{n-k}]  \sum_{\ell=0}^n A_{\ell}(z) \, B_{n-\ell}(z^{-1})
 \nonumber
 \\
 &=&
\sum_{\ell=0}^n \, \sum_{a=0}^k \, [ z^{ n-k+a}] A_\ell(z) \, [ z^{-a}] B_{n-\ell}(z^{-1})
  \nonumber
 \\
 &=&
\sum_{\ell=n-k}^n \, \sum_{a=0}^{k} \, [ z^{ n-k+a}] A_\ell(z) \, [ z^{-a}] B_{n-\ell}(z^{-1})
 \label{eq:first_sum}
 \end{eqnarray}
where in the last step we note that $[z^{ n-k+a}]A_\ell(z)=0$ if $\ell < n-k$.

For $[z^{n+1-k}]F_{n+1}(z)$, we reindex $\ell$ by 1 and note that $[z^{n+1-k}]A_0(z)=0$ for all $n$, giving us the expression for $[z^{n+1-k}]F_{n+1}(z)$:
 \begin{equation}
   \label{eq:second_sum}
\sum_{\ell=n-k}^{n} \, \sum_{a=0}^{k} \, [ z^{ n+1-k+a}] A_{\ell+1}(z) \, [ z^{-a}] B_{n-\ell}(z^{-1})
\end{equation}
By assumption, we know that
\[
        [ z^{ n-k+a}] A_\ell(z)  =[ z^{ n+1-k+a}] A_{\ell+1}(z), \quad n-k \leq \ell \leq n
\]
since $0 \leq k \leq \lfloor n/m \rfloor$.  The coefficients of $[z^{-a}] B_{n-\ell}(z^{-1})$ are the same, and consequently the two sums (\ref{eq:first_sum})  and (\ref{eq:second_sum}) are
equal.
 \end{proof}

\begin{lemma}\label{lemma:convolution}
Given the generating function
$G(z,q) = \prod_{i \geq 1} (1-zq^i)^{-a_i}$ where $a_1=1$, let $A_n(z) = [q^n] G(z,q)$.  Let $Q(q) = \sum_{j=0}^\infty c_j q^j$.  
Let $F_n(z) = [q^n] G(z,q)Q(q)$. Then the tail coefficients of $F_n(z)$ stabilize; that is,
\[
[z^{k}] F_n(z) = [z^{k+1}] F_{n+1}(z), \quad k \geq n/2.
\]
%The generating function for the tail coefficients is
%\[
%\left(\sum_{j=0}^\infty c_j q^j  \right) \times \prod_{i \geq 1} \frac{1}{ (1-zq^i)^{a_{i+1}}}  .
%\]
\end{lemma}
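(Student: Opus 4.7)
The plan is to reduce the statement to the stabilization already established for $A_n(z)$ in the Corollary following Theorem~\ref{thm:stabliztion2}, exploiting the crucial fact that $Q(q)$ is independent of $z$.

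First I would note that since $Q(q) = \sum_{j \geq 0} c_j q^j$ contributes only in the $q$-variable, the polynomial $F_n(z)$ is simply a finite convolution:
\[
F_n(z) = [q^n]\bigl(G(z,q)Q(q)\bigr) = \sum_{j=0}^n c_j\, A_{n-j}(z).
\]
Consequently, extracting coefficients gives
\[
[z^k]F_n(z) = \sum_{j=0}^n c_j\,[z^k]A_{n-j}(z),\qquad [z^{k+1}]F_{n+1}(z) = \sum_{j=0}^{n+1} c_j\,[z^{k+1}]A_{n-j+1}(z).
\]
The strategy is to match these two expressions term-by-term in $j$.

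For each $j$ with $0 \leq j \leq n$, the hypothesis $k \geq n/2$ forces $k \geq (n-j)/2$, so the Corollary after Theorem~\ref{thm:stabliztion2} applied to $G(z,q)$ (valid since $a_1 = 1$) yields $[z^k]A_{n-j}(z) = [z^{k+1}]A_{n-j+1}(z)$.  The only extra term in the second sum is the $j = n+1$ summand, which equals $c_{n+1}\,[z^{k+1}]A_0(z)$ and vanishes because $A_0(z) = 1$ while $k+1 \geq 1$.  Hence the two sums coincide and the claimed stabilization holds.

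I do not foresee any serious obstacle: the lemma is essentially the special case $B_\ell(z) = c_\ell$ (a constant polynomial) of Lemma~\ref{thm:freezing1}, combined with the explicit stabilization for $A_n(z)$.  The only point worth emphasizing is that the inequality $k \geq n/2$ in the conclusion is precisely strong enough to activate the Corollary uniformly across every shifted index $n-j$ that appears in the convolution, which is why no further hypothesis on the auxiliary series $Q$ is required.
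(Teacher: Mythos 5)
Your proposal is correct and follows essentially the same route as the paper: write $F_n(z)$ as the convolution $\sum c_j A_{n-j}(z)$ and match summands pairwise using the corollary to Theorem~\ref{thm:stabliztion2}, noting that $k \geq n/2 \geq (n-j)/2$ activates it for every shifted index and that the extra $A_0$ term vanishes. The paper's only cosmetic difference is that it first truncates the sum to the indices where the coefficient of $A_\ell$ is nonzero before matching terms.
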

\begin{proof}
By construction, the polynomials $F_n(z)$ have the form
\[
F_n(z) = \sum_{\ell=0}^n c_{n-\ell} A_\ell(z).
\]
Let $k \geq n/2$. Then the coefficients for $[z^{k}] F_n(z)$ and $ [z^{k+1}] F_{n+1}(z)$ are given by
\begin{eqnarray*}
[z^{k}]  F_n(z) = \sum_{\ell=k}^n c_{n-\ell} [z^{k}  ]  A_\ell(z),
\quad
\,[z^{k+1}]F_{n+1}(z) = \sum_{j = k+1}^{n+1} c_{n+1-j} [z^{k+1}] A_{j}(z).
\end{eqnarray*}
As a consequence of Corollary 1, $c_{n-\ell} [z^{k}  ]  A_\ell(z) = c_{n+1-j} [z^{k+1}] A_{j}(z)$ for $j= \ell+1$ and $k \leq \ell \leq n$.
\end{proof}
 
 \begin{theorem}
 If $a_1 = b_1 = 1$, and 
\[
G(z,q) = \sum_{n=0}^\infty F_n (z) q^n = \prod_{i \geq 1} {(1 - z q^i)}^{-a_i} {(1 - z^{-1} q^i)}^{-b_i} {(1 \pm q^i)}^{c_i},
\]
then the coefficients of the Laurent polynomials $F_n(z)$ satisfy
\[
[z^{n-k}] F_n(z) = [z^{n+1-k}] F_{n+1}(z), \quad [z^{-(n-k)}] F_n(z) = [z^{-(n+1-k)}] F_{n+1}(z),
\]
for $0 \leq k \leq n/2$.
 \end{theorem}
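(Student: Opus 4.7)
The strategy is to peel off the three kinds of factors and combine the two preparatory lemmas in sequence. Set
\[
A(z,q) = \prod_{i\geq 1}(1-zq^i)^{-a_i}, \qquad \widehat{B}(w,q) = \prod_{i\geq 1}(1-wq^i)^{-b_i}, \qquad Q(q) = \prod_{i\geq 1}(1\pm q^i)^{c_i},
\]
so that $G(z,q) = A(z,q)\,\widehat{B}(z^{-1},q)\,Q(q)$. Write $A_n(z) = [q^n]A(z,q)$ and $\widehat{B}_n(w) = [q^n]\widehat{B}(w,q)$; since $a_1 = b_1 = 1$, each is a polynomial of degree exactly $n$ with leading coefficient $1$ (the top coefficient in the expansion (\ref{eq:take_away}) comes from the unique choice $e_1 = n$, $e_j = 0$ for $j \geq 2$).

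First I would absorb $Q$ into $A$ by setting $H_n(z) = [q^n]A(z,q)Q(q)$. Because $Q(q)$ is an ordinary power series in $q$ alone with $Q(0) = 1$, Lemma \ref{lemma:convolution} applies directly and gives the tail stabilization
\[
[z^k]H_n(z) = [z^{k+1}]H_{n+1}(z), \qquad k \geq n/2.
\]
Multiplication by $Q$ preserves the top coefficient, so $\deg H_n = n$ with leading coefficient $1$. Rewritten in upper-end form, this is precisely the hypothesis of Lemma \ref{thm:freezing1} with $m = 2$.

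Second, I would expand
\[
F_n(z) = \sum_{\ell=0}^n H_\ell(z)\,\widehat{B}_{n-\ell}(z^{-1}),
\]
and apply Lemma \ref{thm:freezing1} with the $H_\ell$ playing the role of $A_\ell$ and the $\widehat{B}_{n-\ell}$ playing the role of $B_{n-\ell}$. The conclusion is exactly $[z^{n-k}]F_n(z) = [z^{n+1-k}]F_{n+1}(z)$ for $0 \leq k \leq n/2$, which is the upper-end statement. The lower-end statement follows from the symmetry $z \leftrightarrow z^{-1}$: this interchanges the roles of the two Euler products (and leaves $Q$ untouched), and since the hypothesis $a_1 = b_1 = 1$ is symmetric in $a$ and $b$, the identical argument applied to $G(z^{-1},q)$ yields $[z^{-(n-k)}]F_n(z) = [z^{-(n+1-k)}]F_{n+1}(z)$.

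The whole proof is essentially bookkeeping once the two lemmas are in place. The only point requiring care is checking that the degree-and-leading-coefficient hypothesis of Lemma \ref{thm:freezing1} survives the multiplication by $Q$, which is immediate from $Q(0) = 1$ and $[z^n]A_n(z) = 1$. I do not anticipate a genuine obstacle beyond assembling the pieces correctly.
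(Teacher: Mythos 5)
Your proposal is correct and takes exactly the route the paper intends: the theorem is stated immediately after Lemmas \ref{thm:freezing1} and \ref{lemma:convolution} with no further argument, and your assembly---absorbing the $z$-free factors $Q(q)$ into the $(1-zq^i)^{-a_i}$ product via Lemma \ref{lemma:convolution}, then convolving with the $(1-z^{-1}q^i)^{-b_i}$ factors via Lemma \ref{thm:freezing1}, and obtaining the lower-end statement by the $z\leftrightarrow z^{-1}$ symmetry---is precisely that implicit proof. Your verification that $\deg H_n = n$ survives multiplication by $Q$ is the one hypothesis worth checking, and you check it.
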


\section{ Plane Overpartition  Stabilization }

A plane partition is an array of positive integers, conventionally justified to the upper left corner of the fourth quadrant, which are weakly descending left in rows and down in columns.  A plane overpartition is a plane partition whose entries may be overlined or not according to
certain rules \cite{CSV}: in each row, the last occurrence of an integer may be overlined
(or not) and in every column, all but the first occurrence of an integer are overlined, while the first occurrence may or may not be overlined.
In \cite[Proposition 4]{CSV}, the generating function for the weighted plane overpartitions is found to be 
\[
\sum_{ \Pi \,\,\textrm{is a plane overpartition}}
z^{ o(\Pi)} q^{ | \Pi |} =
\prod_{n=1}^\infty  \frac{ (1+ zq^n)^n}{  (1-q^n)^{ \lceil n/2 \rceil} (1-z^2q^n)^{ \lfloor n/2 \rfloor } }
\]
where $o( \Pi)$ is the number of overlined parts of the plane
overpartition $\Pi$.

\begin{theorem}
Let $G(z,q)$ be the generating function for the polynomials $F_n(z)$:
\[
G(z,q)=
\prod_{n=1}^\infty  \frac{ (1+ zq^n)^n}{  (1-q^n)^{ \lceil n/2 \rceil} (1-z^2q^n)^{ \lfloor n/2 \rfloor } }
=
\sum_{n=0}^\infty F_n(z) q^n .
\]
Then the coefficients of the polynomials $F_n(z)$  satisfy the stabilization condition
\[
[z^{k+1}] F_{n+1}(z) = [ z^k] F_n(z),
\]
for $k\geq  2n/3$.
\end{theorem}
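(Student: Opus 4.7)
The plan is to adapt the bijective argument from Theorem \ref{thm:stabliztion2} by expanding $G(z,q)$ directly and constructing a weight-preserving correspondence between the index sets for $[z^{k+1}]F_{n+1}(z)$ and $[z^k]F_n(z)$. Writing
\[
G(z,q)=\prod_{j\geq 1}(1+zq^j)^j\cdot\prod_{j\geq 2}(1-z^2q^j)^{-\lfloor j/2\rfloor}\cdot\prod_{j\geq 1}(1-q^j)^{-\lceil j/2\rceil},
\]
a contribution to $[z^k q^n]G$ is indexed by nonnegative integers $(e_j,f_j,g_j)$ with $e_j\leq j$ and $f_1=0$, satisfying $\sum_j e_j+2\sum_j f_j=k$ and $\sum_j j(e_j+f_j+g_j)=n$. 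Each such tuple contributes with weight
\[
W=\prod_j\binom{j}{e_j}\binom{\lfloor j/2\rfloor+f_j-1}{f_j}\binom{\lceil j/2\rceil+g_j-1}{g_j}.
\]
Denote the resulting index sets by $S_{n,k}$ and $S_{n+1,k+1}$.

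The candidate bijection $\phi:S_{n+1,k+1}\to S_{n,k}$ is built from two rules, each decreasing both the $z$-weight and the $q$-weight by exactly $1$: (A) if $e_1=1$, replace $e_1$ by $0$; (B) if $e_1=0$ and $f_2\geq 1$, replace $e_1$ by $1$ and $f_2$ by $f_2-1$. Weight preservation holds because $\binom{1}{0}=\binom{1}{1}=1$ and $\binom{\lfloor 2/2\rfloor+f_2-1}{f_2}=\binom{f_2}{f_2}=1$ for every $f_2\geq 0$. Since the images of (A) and (B) are distinguished by the value of $e_1$ in the target, $\phi$ is injective, and each case has an obvious inverse (add $1$ to $e_1$ on the $e_1=0$ piece of $S_{n,k}$; subtract $1$ from $e_1$ and add $1$ to $f_2$ on the $e_1=1$ piece).

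The main obstacle is ensuring that $\phi$ is defined on every element of $S_{n+1,k+1}$, i.e., that no element has $e_1=0$ and $f_2=0$ simultaneously; this is the step where the hypothesis $k\geq 2n/3$ enters. Setting $E=\sum_{j\geq 2}e_j$ and $F=\sum_{j\geq 3}f_j$ under these assumptions yields $E+2F=k+1$ together with
\[
2E+3F\leq\sum_{j\geq 2}j\,e_j+\sum_{j\geq 3}j\,f_j\leq n+1.
\]
Eliminating $E\geq 0$ from the linear identity produces the simultaneous constraints $F\geq 2k-n+1$ and $F\leq (k+1)/2$, which are compatible only when $3k\leq 2n-1$. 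The hypothesis $k\geq 2n/3$ gives $3k\geq 2n$, contradicting this inequality, so the forbidden configuration cannot occur and the bijection completes the proof.
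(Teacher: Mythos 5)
Your proof is correct, but it takes a genuinely different route from the paper's. The paper factors $G$ as $G_A\cdot G_B\cdot(1+zq)\cdot C(q)$ with $G_A=\prod_{n\ge2}(1-z^2q^n)^{-\lfloor n/2\rfloor}$ and $G_B=\prod_{n\ge2}(1+zq^n)^n$, establishes the shift-by-two stabilization $[z^k]A_n=[z^{k+2}]A_{n+2}$ for $k\ge 2n/3$ from Theorem \ref{thm:stabilization2}, proves the degree bound $\deg(B_N)\le \tfrac23 N$ (with a small-case check for $N<21$), and then pushes stability through the convolutions $Q_n=\sum A_{n-\ell}B_\ell$, the factor $(1+zq)$, and finally the $z$-free series $C(q)$ via the technique of Lemma \ref{lemma:convolution}. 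You instead work at the level of individual monomials of the full triple product and exhibit a single two-case, weight-preserving bijection $S_{n+1,k+1}\to S_{n,k}$: toggle $e_1$ from $1$ to $0$, or, when $e_1=0$, trade one $f_2$ (worth $z^2q^2$) for one $e_1$ (worth $zq$). The weight preservation is clean because $\binom{1}{0}=\binom{1}{1}=\binom{f_2}{f_2}=1$, and your linear-constraint argument ($E+2F=k+1$, $2E+3F\le n+1$ forcing $3k\le 2n-1$) correctly shows the only undefined configuration $e_1=f_2=0$ cannot occur when $k\ge 2n/3$; I verified the elimination and the surjectivity of $\phi$ onto both the $e_1=0$ and $e_1=1$ pieces of $S_{n,k}$. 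What your approach buys is a self-contained combinatorial proof that avoids the degree estimates on $B_N$ and the chain of convolution lemmas, and it makes transparent exactly where the threshold $2n/3$ comes from (the $q$-to-$z$ cost ratios $2/1$ for the $e_j$, $j\ge2$, and $3/2$ for the $f_j$, $j\ge 3$); what the paper's modular approach buys is reusability, since each factor's contribution is analyzed once and the convolution lemmas apply to other products of the same shape.
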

\begin{proof}
Let $\{A_n(z)\}$ be the polynomial sequence with generating function $G_A(z,q)$ where
\[
G_A(z,q) = \prod_{n=2}^\infty \frac{ 1}{ (1-z^2q^n)^{ \lfloor n/2 \rfloor } } =  \sum_{n=0}^\infty A_n(z) q^n
\]
and $\{ B_n(z)\}$ with generating function $G_B(z,q)$:
\[
G_B(z,q) = \prod_{n=2}^\infty (1+ zq^n)^n  = \sum_{n=0}^\infty B_n(z) q^n  .
\]
Easily, we have that $\deg(A_n) = 2 \lfloor n/2 \rfloor$.
By Theorem \ref{thm:stabilization2}, replacing $z$ by $z^2$, we also find
\[
[z^k] A_n = [ z^{k+2}] A_{n+2}, \quad k \geq 2n/3 .
\]

The degree of the polynomial $B_N(z)$ is the largest number of parts in a possible partition of $N$ drawn from a multiset of two 2s, three 3s, etc.  For $N \geq 21 = 2+2+3+3+3+4+4$, the average size of part is at least 3 and so 
\[
\deg(B_N) \leq \frac{1}{3}N \, \text{.}
\]
  For smaller $N$, direct calculation shows that $\deg(B_N) \leq \frac{2}{3}N$.

It is more convenient to work with the intermediate polynomials
\[
Q_n(z) = [q^n] G_A(z,q) G_B(z,q) =\sum_{\ell=0}^n A_{n-\ell}(z) \, B_\ell(z).
\]

We have the summation formula for $[z^k] Q_n(z)$:
\begin{eqnarray*}
[z^k] Q_n(z)
&=&
\sum_{\ell=0}^n \, \sum_{a=0}^k [ z^{k-a}] A_{n - \ell}(z) \cdot [z^a] B_\ell(z)
\\
\\
&=&
\sum_{\ell \in  I_{k,n} } \,   \sum_{ a =0 }^{ \deg(B_\ell)} \, [ z^{k-a}] A_{n - \ell}(z)  \cdot [z^a] B_\ell(z) 
\end{eqnarray*}
where $I_{k,n}$ is the set of indices 
\[
I_{k,n} =
\left\{
\ell :
\deg(B_\ell) + \deg(A_{n - \ell }) \geq k, \,  0 \leq \ell \leq n
\right\} .
\]
We have $I_{k+2, n+2} = I_{k,n} $ since for any $B_\ell$ the matching $A_{n-\ell}$ for sums of $k$ map to the matching $A_{n+2-\ell}$ for sums of $k+2$.

We next show that
\[
\sum_{\ell \in  I_{k,n} } \,   \sum_{ a = 0}^{ \deg(B_\ell)} \, [ z^{k-a}] A_{n - \ell}(z)  \cdot [z^a] B_\ell(z) 
=
\sum_{\ell \in  I_{k+2,n+2} } \,  \sum_{ a = 0}^{ \deg(B_\ell)} \, [ z^{k+2-a}] A_{n+2 - \ell}(z)  \cdot [z^a] B_\ell(z) ;
\]
To do this, we need that  all the terms $[ z^{k-a}] A_{n- \ell}$ fall into the stable range of indices. 
Now
to get into the stable range, we need
\[
k-a \geq \frac{2}{3} ( n  - \ell) 
\]
where $0\leq a \leq \deg B_\ell$ and $\ell \in I_{k,n}$.
We consider  the stronger condition
\begin{eqnarray*}
\frac{2}{3} n - \deg B_\ell  &\geq&  \frac{2}{3} ( n  - \ell) ,
\end{eqnarray*}
which reduces to 
\[
\deg(B_\ell) \leq \frac{2}{3} \ell, \quad \ell \in I_{k,n} 
\]
which does indeed hold.

Our next step is to define
\[
P_n(z) = (1+zq) Q_n(z).
\]
(Leaving out this factor earlier simplified the degree analysis, since without it we had the single exceptional case of $\deg (B_1) = 1$.)  We observe that
\begin{eqnarray*}
\,
[z^k] P_n (z) &=& [ z^k] Q_n(z) + [ z^{k-1}  ] Q_{n-1}(z),
\\
\,
[z^{k+1}]  P_{n+1} (z) &=&  [ z^{k+1}] Q_{n+1}(z) + [ z^{k}] Q_{n}(z) .
\end{eqnarray*}
Hence, we have
\[
[z^k] P_n(z) = [z^{k+1}] P_{n+1}(z), \quad k \geq 2n/3 .
\]

To finish the proof, we define a polynomial family $C_n(q)$ by
\[
\prod_{n=1}^\infty  \frac{ 1 }{  (1-q^n)^{ \lceil n/2 \rceil} } = \sum_{n=0}^\infty C_n(q) .
\]
Then the polynomials $F_n(z)$ are given by
\[
F_n(z) = [q^n] C(q) \, \sum_{\ell =0}^\infty  P_\ell (z) q^\ell .
\]
By Lemma \ref{lemma:convolution}, 
we see that this construction maintains the stability of the coefficients of the polynomial family $\{ P_\ell(z)\}$.
\end{proof}

\begin{corollary}
Let $\overline{pp}_k(n)$ be the number of plane overpartitions of $n$ with $k$ overlined parts. If $k \geq 2n/3$, then
\[
\overline{pp}_k(n) =\overline{pp}_{k+1}(n+1).
\]
\end{corollary}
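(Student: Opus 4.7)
The plan is to derive the corollary as an immediate consequence of the preceding theorem by unpacking the combinatorial meaning of the coefficients of $F_n(z)$. First, I would recall from the generating function identity stated at the start of the section,
\[
\sum_{\Pi} z^{o(\Pi)} q^{|\Pi|} = \prod_{n=1}^\infty \frac{(1+zq^n)^n}{(1-q^n)^{\lceil n/2 \rceil}(1-z^2 q^n)^{\lfloor n/2 \rfloor}} = G(z,q) = \sum_{n=0}^\infty F_n(z) q^n,
\]
where the outer sum is over plane overpartitions $\Pi$. Matching coefficients of $z^k q^n$ on both sides immediately gives $[z^k] F_n(z) = \overline{pp}_k(n)$, since $\overline{pp}_k(n)$ is by definition the number of plane overpartitions of weight $n$ with exactly $k$ overlined parts.

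Next, I would invoke the preceding theorem, which establishes $[z^{k+1}] F_{n+1}(z) = [z^k] F_n(z)$ in the range $k \geq 2n/3$. Combining this with the combinatorial identification above yields the desired equality $\overline{pp}_k(n) = \overline{pp}_{k+1}(n+1)$ in exactly the stated range.

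There is essentially no obstacle here: the corollary is a direct translation of the stabilization theorem into combinatorial language. The only thing to be careful about is to verify that the convention in the statement matches that of the theorem, namely that we are shifting from $(n,k)$ to $(n+1,k+1)$ under the assumption $k \geq 2n/3$, which is the same inequality appearing in the theorem with the roles of $k$ and $n$ unchanged. No further computation is required.
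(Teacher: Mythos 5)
Your proposal is correct and matches the paper's intent exactly: the paper states this corollary without proof as an immediate consequence of the preceding theorem, relying on the identification $[z^k]F_n(z) = \overline{pp}_k(n)$ from the generating function of Corteel--Savelief--Vuleti\'c. Nothing further is needed.
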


\end{document}